      \theoremstyle{plain}
     \newtheorem{thm}{Theorem}[section]
\newtheorem{lem}[thm]{Lemma}
\newtheorem{cor}[thm]{Corollary}
\newcommand{\bess}{\begin{eqnarray*}}
\newcommand{\eess}{\end{eqnarray*}}
\begin{document}

\author{Xiaoguang Wang}
\address{Department of Mathematics, Zhejiang University, Hangzhou, 310027, P.R.China}
\email{wxg688@163.com}

% \author{Yongcheng Yin}
% \address{Department of Mathematics, Zhejiang University, Hangzhou, 310027, P.R.China}
% \email{yin@zju.edu.cn}

%%%%%%%%%%%%%%%%%%%%%%%%%%%%%%%%%%%%%%%%%%%%%%%%%%%%%

   % the address where the research was carried out
   %\address{University of G\"ottingen, G\"ottingen, Germany}

   % current address, usually not needed because it is the same as the
   % regular address
   %\curraddr{Department of Mathematics, Pennsylvania State University,
    % University Park, State College PA 16802}

   %\email{simpson@math.psu.edu}

   % title

   \title[]{Conjugacy between polynomial basins}

   % Note that the short title for running heads goes in square
   % brackets.  This is optional.  The long title goes in curly
   % braces.  In the long title, line breaks are indicated by \\.

   % abstract (optional)
   \begin{abstract}
 In this article, we study the  properties of conjugacies between polynomial basins.
 For any conjugacy, there is a quasiconformal conjugacy in the same homotopy class minimizing the
 dilatation. We compute the precise value of the minimal dilatation.
  The quasiconformal conjugacy  minimizing  the dilatation  is not unique in general. We give a necessary and sufficient condition when the
 extremal map is unique.
   \end{abstract}

   % AMS subject classifications (used in AMS journals)
   \subjclass[2010]{Primary 37F10; Secondary 37F30}

   % AMS keywords (used in AMS journals)
   \keywords{basin of infinity, conjugacy}

   % acknowledge support, etc
  % \thanks{This research was partially supported by NSF grant
   %  DOA-123456789.}
   %\thanks{We would like to thank our colleagues for their helpful
    % criticism.}

   % dedication
  % \dedicatory{Dedicated to Professor Donald Knuth on the occasion
   %  of his $100$th birthday}

   % today's date, or fill in whatever date you prefer
   \date{\today}

% This ends the top matter information.
% We can now tell LaTeX to display the top matter.

   \maketitle

\section{Introduction}

Let $f$ be a polynomial of degree $d\geq2$. The complex plane can be decomposed into the union of its open and connected {\it basin of
infinity} % defined by
$$X(f):=\{z\in\mathbb{C}: f^n(z)\rightarrow\infty \text{ as } n\rightarrow\infty \}$$
and its complement, the compact {\it filled Julia set} $K(f)$.
%The filled Julia set $K(f)$ is defined by
%$$K(f)=\{z\in \mathbb{C}: \text{ the sequence } f^n(z) \text{ is bounded }\}$$
The {\it Green function} $G_f: \mathbb{C}\rightarrow [0,\infty)$ of $f$ is given by
$$G_f(z)=\lim_{n\rightarrow\infty}d^{-n}\log^+|f^n(z)|,$$
where $\log^+(t)=\max\{0,\log t\}$ for $t\in[0,\infty)$. The function $G_f$ vanishes exactly on $K(f)$, harmonic on $X(f)$, continuous on
$\mathbb{C}$, and satisfies $G_f(f(z))=dG_f(z)$ for all $z\in \mathbb{C}$.

 We denote by $C(f)$  the critical set of $f$ in $\mathbb{C}$, and $P(f):=\overline{\cup_{k\geq1}f^k(C(f))}$ the postcritical set.
Consider the dynamic of $f$ restricted in  $X(f)$.
We say that $\phi$ is a conjugacy between  $(f,X(f))$ and $(g, X(g))$ if $\phi:X(f)\rightarrow X(g)$ is a homeomorphism  such that $\phi\circ f=
g\circ \phi$ in $X(f)$.
It determines a homotopy class  $[\phi]$  consisting of all  conjugacies between   $(f,X(f))$ and $(g, X(g))$, homotopic to  $\phi$  rel the set $P(f)\cap X(f)$. %
It's known that the homotopy class $[\phi]$ always contains  a quasiconformal (or smooth) map.

Let $\phi$ be a quasiconformal conjugacy between  $(f,X(f))$ and $(g, X(g))$, let $$\mu_\phi=\frac{\overline{\partial}\phi}{\partial \phi}=\frac{\phi_{\overline{z}}}{\phi_z}\frac{d\overline{z}}{dz}$$ be its Beltrami form.
The esssup norm $\|\mu_\phi\|$ of $\mu_\phi$ measures  the  dilatation of $\phi$.
%There are two quantities measuring  the  dilatation of $\phi$:  the esssup norm $\|\mu_\phi\|$ of $\mu_\phi$, and $K(\phi)=\frac{1+\|\mu_\phi\|}{1-\|\mu_\phi\|}$.
%In this paper, we use the first as dilatation.
  Let $\delta_{[\phi]}=\inf\{\|\mu_\psi\|: \psi\in[\phi] \text{ is a quasiconformal map} \}$.  Note that the  quasiconformal maps in $[\phi]$ form a normal family,  there is a  quasiconformal map $\varphi\in[\phi]$ whose dilatation equals $\delta_{[\phi]}$. The natural questions are:

  \

 {\it  Suppose $\phi$ is a conjugacy between  $(f,X(f))$ and $(g, X(g))$, what is  $\delta_{[\phi]}$? Further, let $\varphi\in [\phi]$ with $\|\mu_\varphi\|=\delta_{[\phi]}$, when is $\varphi$ unique?  }

\

 This note is devoted to answer these questions.
 We compute the precise value  $\delta_{[\phi]}$ and give a necessary and sufficient condition when the extremal map is unique. Since some basic properties
  of conjugacies are needed first, we state the result in the next section.
  The main result is  Theorem \ref{main}. 

 This work is inspired by DeMarco and Pilgrim's sequel works \cite{DP1,DP2,DP3}. The notations we use here are borrowed from their's.  The original idea concerning the deformation of polynomial basins comes from the {\it wring motion }(also called the {\it Branner-Hubbard motion}) introduced by Branner and Hubbard \cite{BH1}, and the Teichm\"uller theory of rational maps developed by McMullen and Sullivan \cite{MS}. The uniqueness of the extremal quasiconformal conjugacy follows from a result of Reich and
 Strebel \cite{RS}. The theory of  extremal quasiconformal  mappings is used by Cui \cite{C} to construct  the extremal conjugacy between rational maps.
 
%  The extremal  conjugacy between  rational maps via  is studied by Cui \cite{C}. 

 We denote by  $\mathbb{D}$  the unit disk,  $\mathbb{H}=\{\Re z>0\}$ the right half plane endowed the metric $|dz|/\Re z$.

%If $(f,X(f))$ and $(g, X(g))$  are topologically conjugate, then there is a
%q.c. conjugacy between them. We may label the critical points by $c_f^i$ and $c_g^i$, $1\leq i\leq D$, so that  a conjugacy maps $c_f^i$ to
%$c_g^i$.

% If $(f,X(f))$ and $(g, X(g))$  are topologically conjugate, then there is a
%q.c. conjugacy between them. We define
%$$\delta((f,X(f)), (g,X(g)))=\inf\{\|\mu_h\|: h \text { is  a qc conjugacy between } (f,X(f)) \text{ and }(g, X(g))\}.$$

%It's natural to know the quality of $\delta((f,X(f), (g,X(g))$.

%\begin{thm} $$\delta((f,X(f), (g,X(g))=\max_{1\leq i\leq D}\bigg|\frac{{\rm Log} \phi_f(f^{l(c_f^i)}(c_f^i))-
%{\rm Log} \phi_g(g^{l(c_g^i)}(c_g^i))}{{\rm Log}\phi_f(f^{l(c_f^i)}(c_f^i))+\overline{{\rm Log} \phi_g(g^{l(c_g^i)}(c_g^i))}}\bigg|.$$
%\end{thm}

\section{Basic properties of conjugacy}

%Let $\mathcal{P}_d$ be the space of monic and centered polynomials of degree $d$. Let $f\in \mathcal{P}_d$ and
%

Let $\mathcal{P}_d$ be the space of monic and centered polynomials. That is each $f\in \mathcal{P}_d$ takes the form
$$f(z)=z^d+a_{d-2}z^{d-2}+\cdots+a_1z+a_0,$$
where $a_{d-2},\cdots,a_0\in\mathbb{C}$.  Any polynomial of degree $d$ is  conjugate by an automorphism of $\mathbb{C}$ to some element of
$\mathcal{P}_d$.

Set $M(f)=\max\{G_f(c): c\in C(f)\}.$
The B\"ottcher map $\phi_f$ of $f$ is defined near $\infty$  with $\phi'_f(\infty)=1$  and it can be extended to an isomorphism  (c.f. \cite{M})
$$\phi_f: \{z\in\mathbb{C}: G_f(z)>M(f)\}\rightarrow \{w\in \mathbb{C}:|w|>e^{M(f)}\}.$$

%Let $C(f)$ be the critical set  of $f$ in $\mathbb{C}$.

\subsection{Conjugacy preserves escape order}

%First, a distortion lemma for escape rate:

 \begin{lem}\label{level} Let $\phi$ be a conjugacy  between   $(f,X(f))$ and $(g, X(g))$, then for any $z_1,z_2\in X(f)$,

 1.
 $G_f(z_1)>G_f(z_2)$ if and only if $G_g(\phi(z_1))>G_g(\phi(z_2))$.

 2. If $\phi$ is quasiconformal, then
 $$\frac{G_f(z_1)-G_f(z_2)}{K(\phi)}\leq G_g(\phi(z_1))-G_g(\phi(z_2))\leq{K(\phi)}(G_f(z_1)-G_f(z_2)).$$
\end{lem}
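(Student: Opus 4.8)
The plan is to reduce everything to a single monotone ``profile'' function. Set $u=G_g\circ\phi$ on $X(f)$. Since $\phi\circ f=g\circ\phi$ and $G_g\circ g=d\,G_g$, the function $u$ is continuous, positive, and satisfies the same functional equation as $G_f$, namely $u\circ f=d\,u$. Consequently the ratio $\nu:=u/G_f$ is a continuous positive function on $X(f)$ that is $f$-invariant, $\nu\circ f=\nu$. The whole of part 1 will follow once I show that $\nu$ depends only on the value of $G_f$, i.e. that $u=\Theta\circ G_f$ for some $\Theta:(0,\infty)\to(0,\infty)$, and that $\Theta$ is strictly increasing.

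To see that $\nu$ is constant on each level set $\{G_f=c\}$, I first treat $c>M(f)$, where the B\"ottcher coordinate $\phi_f$ conjugates $f$ to $w\mapsto w^d$ and the level set is the round circle $\{|w|=e^{c}\}$. Here the grand orbit of a point within its own level set---obtained by pushing forward by $f^m$ and returning by inverse branches, all of which remain in the critical-point-free region $\{G_f>M(f)\}$---consists of the $d^m$-adic rotations of the point and is therefore dense in the circle; since $\nu$ is continuous and invariant, it is constant on $\{|w|=e^c\}$. For $c\le M(f)$ I use invariance directly: $\nu(z)=\nu(f^n z)$ and $G_f(f^n z)=d^n G_f(z)>M(f)$ for $n$ large, so $\nu(z)$ is determined by $G_f(z)$. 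Thus $u=\Theta\circ G_f$ with $\Theta(dt)=d\,\Theta(t)$. Finally, because $\phi$ is a homeomorphism of $X(f)$ onto $X(g)$, the equalities $\{u=\Theta(c)\}=\{G_f=c\}=\phi^{-1}(\{G_g=\Theta(c)\})$ force $\Theta$ to be injective, and, being proper with $\Theta(t)\to\infty$ as $t\to\infty$, it is a strictly increasing homeomorphism of $(0,\infty)$; this is exactly part 1. I also record that, since local non-injectivity is a conjugacy invariant, $\phi$ carries $C(f)\cap X(f)$ onto $C(g)\cap X(g)$ preserving the $G$-ordering, whence $\Theta(M(f))=M(g)$.

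For part 2 I combine the identity $G_g(\phi(z))=\Theta(G_f(z))$ with the modulus distortion of quasiconformal maps. Fix $c_1>c_2\ge M(f)$. The region $A=\{c_2<G_f<c_1\}$ is, in B\"ottcher coordinates, the round annulus $\{e^{c_2}<|w|<e^{c_1}\}$ of modulus $(c_1-c_2)/2\pi$; by level preservation $\phi(A)=\{\Theta(c_2)<G_g<\Theta(c_1)\}$, which (using $\Theta(c_2)\ge\Theta(M(f))=M(g)$) is again a round annulus, of modulus $(\Theta(c_1)-\Theta(c_2))/2\pi$. Since $\phi$ is $K(\phi)$-quasiconformal it distorts the modulus by at most the factor $K(\phi)$, giving $\tfrac1{K(\phi)}(c_1-c_2)\le\Theta(c_1)-\Theta(c_2)\le K(\phi)(c_1-c_2)$. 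To reach arbitrary $c_1>c_2$ I invoke homogeneity: writing $\Theta(c_i)=d^{-n}\Theta(d^n c_i)$ and $c_i=d^{-n}(d^n c_i)$ and choosing $n$ so large that $d^n c_2\ge M(f)$, the bound for $d^n c_1>d^n c_2$ divides down to the same bound for $c_1,c_2$. Substituting $c_i=G_f(z_i)$ yields the two inequalities of part 2.

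The main obstacle is the rigidity in part 1---showing that a mere topological conjugacy cannot tilt the level sets, i.e. that $\nu$ is a function of $G_f$ alone. The key is the density of the within-level grand orbits, which holds cleanly only in the B\"ottcher region and must be propagated to the rest of $X(f)$ by the functional equation; one must also be careful that $\phi$ sends each level set \emph{exactly onto} a level set (so that $\phi(A)$ is a genuine round annulus) and that the critical values are matched, both of which rest on critical points being a conjugacy invariant. Once part 1 is in place, part 2 is a routine modulus-distortion estimate together with the $d$-homogeneity of $\Theta$.
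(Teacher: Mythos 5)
Your proof of part 2 is essentially the paper's own argument: both compare the modulus of the round annulus between two level curves above the critical level with the modulus of its image (a round annulus for $g$, thanks to part 1), apply the $K(\phi)$-quasiconformal distortion of moduli, and propagate to all levels via the homogeneity $G\circ f=d\,G$. Part 1 is where you genuinely diverge. The paper fixes $z_1,z_2$ with $G_f(z_1)>G_f(z_2)$, pushes the annulus $A$ between the two levels forward so that ${\rm mod}(g^n\phi(A))=d^n\,{\rm mod}(\phi(A))\to\infty$, and invokes McMullen's round-annulus theorem (\cite{Mc}, Theorem 2.1) to separate the two image orbits in absolute value in the B\"ottcher coordinate of $g$; the order of Green values follows at once, without ever knowing that $G_g\circ\phi$ is a function of $G_f$. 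You instead prove this stronger factorization $G_g\circ\phi=\Theta\circ G_f$ --- in effect constructing directly the stretching profile $S_\phi$ that the paper only introduces later as a consequence of this lemma --- via the $f$-invariant ratio $\nu=(G_g\circ\phi)/G_f$ and the density of within-level grand orbits (the $d^m$-adic rotations) in the B\"ottcher region, with levels $\le M(f)$ handled by pushing forward. That portion of your argument is correct and attractive.

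However, there is a genuine gap at the step where you pass from the factorization to monotonicity. The claim that "the equalities $\{u=\Theta(c)\}=\{G_f=c\}=\phi^{-1}(\{G_g=\Theta(c)\})$ force $\Theta$ to be injective" is circular: since $u=\Theta\circ G_f$, one has $\{u=\Theta(c)\}=G_f^{-1}\bigl(\Theta^{-1}(\Theta(c))\bigr)$, so the asserted equality $\{u=\Theta(c)\}=\{G_f=c\}$ holds if and only if $\Theta^{-1}(\Theta(c))=\{c\}$; it \emph{is} the injectivity you want, not a fact you may quote. The homeomorphism property alone does not rule out that $\phi$ carries two distinct level sets of $G_f$ (a disconnected compact set) into a single, possibly disconnected, level set of $G_g$. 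You also use, but never verify, that $\Theta$ is continuous.

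Both points are repairable inside your own framework. Continuity: given $c_n\to c$, pick $z$ with $G_f(z)=c$; since the nonconstant harmonic function $G_f$ is an open map, there are $z_n\to z$ with $G_f(z_n)=c_n$, whence $\Theta(c_n)=u(z_n)\to u(z)=\Theta(c)$. Injectivity: apply your factorization to the inverse conjugacy $\phi^{-1}$ to get a continuous $\Lambda$ with $G_f\circ\phi^{-1}=\Lambda\circ G_g$; composing yields $\Lambda\circ\Theta={\rm id}$ on $(0,\infty)$, so $\Theta$ is injective, hence strictly monotone, and it must be increasing because $\Theta(dt)=d\,\Theta(t)>\Theta(t)$. (Alternatively: if $\Theta(c_1)=\Theta(c_2)=L$ with $c_1<c_2$, then $\phi(\{c_1<G_f<c_2\})$ is a bounded open set whose closure stays in $X(g)$, since $\Theta([c_1,c_2])$ is a compact subset of $(0,\infty)$, and whose boundary lies in $\{G_g=L\}$; the maximum and minimum principles then force $G_g\equiv L$ there, contradicting that level sets of the nonconstant harmonic function $G_g$ have empty interior.) With either repair your proof is complete.
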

\begin{proof} We assume $G_f(z_1)>G_f(z_2)$. Choose  $k\geq0$  such that $G_f(f^k(z_2))\geq M(f)$,
then the set $A=\{z\in \mathbb{C}: G_f(f^k(z_2))<G_f(z)<G_f(f^k(z_1))\}$
is an annulus.
%We denote the inner boundary and outer boundary of $\varphi(A)$  by $\gamma_{1}$ and  $\gamma_{2}$.
Since
${\rm mod}(\phi_gg^n\phi(A))={\rm mod}(g^n\phi(A))=d^n{\rm mod}(\phi(A))\rightarrow\infty$
as $n\rightarrow\infty$, we conclude by (\cite{Mc}, Theorem 2.1) that
when $n$ is large enough,
$$ \max_{p\in \alpha_n}|p|<\min_{q\in \beta_n}|q|,$$
 where $\alpha_n$ and $\beta_n$ are inner and outer boundaries of $\phi_gg^n\phi(A)$, respectively.
 Since $\phi_gg^{n+k}(\phi (z_1))\in \beta_n$ and $\phi_gg^{n+k}(\phi(z_2))\in \alpha_n$, we have that
  $|\phi_gg^{n+k}(\phi (z_1))|>|\phi_gg^{n+k}(\phi (z_2))|$. Thus $G_g(\phi (z_1))>G_g(\phi (z_2))$. The `if' part follows from the same argument,
  by considering $\phi^{-1}$.

To prove the bi-Lipschitz inequality when $\phi$ is quasiconformal, we first observe that
$G_f(z_1)=G_f(z_2)$ if and only if $G_g(\phi(z_1))=G_g(\phi(z_2))$ by  the above conclusion. We may still assume $G_f(z_1)>G_f(z_2)$,
then %$\phi$ maps the equipotential set $\{G_f(z)=L_1\}$ to some equipotential set $\{G_g(z)=L_2\}$.
 $$\phi(A)=\{w\in \mathbb{C}: G_g(g^k(\phi(z_2)))<G_g(w)<G_g(g^k(\phi(z_1)))\}.$$

 Note that $2\pi {\rm mod}(\phi(A))=d^{k}$$(G_g(\phi(z_1))-G_g(\phi(z_2)))$ and
$2\pi {\rm mod}(A)=d^{k}(G_f(z_1)-G_f(z_2))$.
By the modular distortion
$${K(\phi)}^{-1}{\rm mod}(A)\leq {\rm mod}(\phi(A))\leq {K(\phi)}{\rm mod}(A),$$
 we get the required inequality.
\end{proof}

Given a polynomial  $f\in\mathcal{P}_d$, the {\it DeMarco-McMullen tree} $T(f)$ of $f$ is the quotient of $X(f)$ obtained by collapsing each connected
component of a level set of $G_f$ to a single point, see \cite{DM}.
Let $\pi_f: X(f)\rightarrow T(f)$ be the quotient map, then $f$ induces a tree map $\sigma_f:T(f)\rightarrow T(f)$ by $\sigma_f\circ \pi_f=
\pi_f\circ f$. The function $G_f$ descends to a height function $h_f: T(f)\rightarrow \mathbb{R}_+$ by $G_f=h_f\circ\pi_f$, which induces a metric $d_f$ on $T(f)$. A consequence of Lemma \ref{level} is 

\begin{cor}\label{tree} A conjugacy $\phi$ between   $(f,X(f))$ and $(g, X(g))$ induces an isomorphism $\iota_\phi: (T(f), \sigma_f, d_f)\rightarrow (T(g), \sigma_g, d_g)$
preserving the tree dynamics. If $\phi$ is quasiconformal, then $\iota_\phi$ is $K(\phi)$-isometric:
 $$\frac{d_f(x,y)}{K(\phi)}\leq d_g(\iota_\phi(x),\iota_\phi(y))\leq K(\phi)d_f(x,y), \   \forall x,y \in T(f).$$
\end{cor}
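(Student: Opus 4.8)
The plan is to construct $\iota_\phi$ directly from $\phi$ and then to read off the dynamical and metric statements from Lemma \ref{level}. First I would verify that $\phi$ descends to the quotients. Using part 1 of Lemma \ref{level} in both directions shows that $G_f(z_1)=G_f(z_2)$ holds if and only if $G_g(\phi(z_1))=G_g(\phi(z_2))$, so $\phi$ carries each level set of $G_f$ onto a level set of $G_g$; being a homeomorphism, it then sends a connected component of a level set of $G_f$ (a point of $T(f)$) onto a connected component of a level set of $G_g$ (a point of $T(g)$). Hence $\pi_g\circ\phi$ is constant on the fibres of $\pi_f$ and factors as $\iota_\phi\circ\pi_f=\pi_g\circ\phi$; applying the same construction to $\phi^{-1}$ produces the inverse, and continuity of both maps is automatic from the quotient topology, so $\iota_\phi$ is a homeomorphism. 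The equivariance $\iota_\phi\circ\sigma_f=\sigma_g\circ\iota_\phi$ then follows by chasing $\sigma_f\circ\pi_f=\pi_f\circ f$, $\phi\circ f=g\circ\phi$, and $\sigma_g\circ\pi_g=\pi_g\circ g$ around the diagram and cancelling the surjection $\pi_f$.

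Next I would record that $\iota_\phi$ respects the height structure. Part 1 of Lemma \ref{level} says exactly that $\phi$ preserves the strict ordering of $G_f$-values, so there is a strictly increasing bijection $\lambda$ of $(0,\infty)$ with $G_g\circ\phi=\lambda\circ G_f$, equivalently $h_g\circ\iota_\phi=\lambda\circ h_f$. In particular $\iota_\phi$ carries the single end of $T(f)$ at infinity (where the level sets are round circles and the tree is a ray) to the corresponding end of $T(g)$, and carries ascending rays to ascending rays.

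For the $K(\phi)$-isometry I would use the height description of the tree metric. Given $x,y\in T(f)$, let $p$ be the point where the ascending rays from $x$ and $y$ toward the end at infinity first meet; then $h_f(p)\ge\max\{h_f(x),h_f(y)\}$, the geodesic $[x,y]$ is the concatenation of the two monotone segments $[x,p]$ and $[p,y]$, and $d_f(x,y)=(h_f(p)-h_f(x))+(h_f(p)-h_f(y))$. By the previous paragraph $\iota_\phi(p)$ is the corresponding branch point of $\iota_\phi(x)$ and $\iota_\phi(y)$, so $d_g(\iota_\phi(x),\iota_\phi(y))$ splits in the same way. To estimate a single segment, choose $z\in\pi_f^{-1}(x)$ and follow the ascending path up to height $h_f(p)$ to obtain $w\in\pi_f^{-1}(p)$, so that $G_f(w)>G_f(z)$, $G_f(z)=h_f(x)$, $G_f(w)=h_f(p)$, and $G_g(\phi(z))=h_g(\iota_\phi(x))$, $G_g(\phi(w))=h_g(\iota_\phi(p))$. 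Part 2 of Lemma \ref{level} applied to the pair $(w,z)$ then yields
$$\frac{h_f(p)-h_f(x)}{K(\phi)}\le h_g(\iota_\phi(p))-h_g(\iota_\phi(x))\le K(\phi)\,(h_f(p)-h_f(x)).$$
Running the identical estimate on $[p,y]$ and summing the two gives the required two-sided bound on $d_g(\iota_\phi(x),\iota_\phi(y))$.

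I expect the metric step, rather than the construction of $\iota_\phi$, to be the main obstacle, and that the delicate points there are essentially order-theoretic: one must confirm that $h_f$ decomposes the tree distance along the two monotone segments exactly as stated, that the chosen representatives $z$ and $w$ genuinely index $x$ and $p$ (so that the ascending path stays on the arc $[x,p]$ up to the branch point), and that $\iota_\phi$ preserves the branch point so that the two segment estimates add to a statement about $d_g(\iota_\phi(x),\iota_\phi(y))$. Once these are settled, all the analytic content is supplied verbatim by Lemma \ref{level}.
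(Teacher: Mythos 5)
Your proof is correct and takes essentially the same route the paper intends: the paper offers no explicit proof, presenting the corollary simply as ``a consequence of Lemma \ref{level}'', and your argument (descending $\phi$ to the quotient trees, chasing the conjugacy relation for equivariance, and estimating the two monotone segments of the tree geodesic through the meet point $p$ via part 2 of the lemma) is exactly the intended fleshing-out. One remark: since Lemma \ref{level} applies to arbitrary pairs of points in $X(f)$, any representatives $z\in\pi_f^{-1}(x)$ and $w\in\pi_f^{-1}(p)$ suffice, so the ``order-theoretic'' worry you flag about the ascending path staying on the arc $[x,p]$ is a non-issue.
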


\subsection{Conjugacy preserves angular difference}

The 1-form $\omega_f=2\partial G_f=2\frac{\partial G_f}{\partial z}dz$ satisfies $f^*\omega_f=d\omega_f$ and it
  provides a dynamically determined conformal metric $|\omega_f|$ on $X(f)$, with singularities at the escaping critical points and their inverse preimages (c.f. \cite{DM,DP2,DP3}). In the metric $|\omega_f|$, for any $L>0$,  we have
  $$\int_{G_f=L}|\omega_f|=2\pi.$$

  \begin{lem}\label{angular} Let $\phi$ be a conjugacy between   $(f,X(f))$ and $(g, X(g))$. Then for any $L>0$ and any connected arc $\gamma$ on  the level set $\{G_f=L\}$, we have
  $$\int_{\gamma}|\omega_f|=\int_{\phi(\gamma)}|\omega_g|.$$
\end{lem}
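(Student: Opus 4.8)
The plan is to first read $\int_\gamma|\omega_f|$ as an external-angle length and prove the identity for levels $L$ above the top critical value $M(f)$, where the level set is a round circle in B\"ottcher coordinates, and then to propagate it to all levels by the homogeneity $f^*\omega_f=d\,\omega_f$ together with the conjugacy relation. For $L>M(f)$ the B\"ottcher map $\phi_f$ identifies $\{G_f=L\}$ with the circle $\{|w|=e^{L}\}$, and writing $w=e^{L}e^{i\theta}$ one has $\omega_f=dw/w=i\,d\theta$ along the level set, so that $\int_\gamma|\omega_f|$ is exactly the angular length of $\gamma$ in the external angle $\theta$. By Lemma \ref{level}, $\phi$ carries level sets of $G_f$ to level sets of $G_g$; write $\phi(\{G_f=L\})=\{G_g=\Lambda(L)\}$ with $\Lambda$ increasing. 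The conjugacy relation forces $\Lambda(dL)=d\Lambda(L)$, so $\Lambda(L)\to\infty$ as $L\to\infty$ and in particular $\Lambda(L)>M(g)$ for $L$ large. Hence for large $L$ the map $\psi=\phi_g\circ\phi\circ\phi_f^{-1}$ is a homeomorphism between round annuli $\{|w|>R_1\}$ and $\{|w|>R_2\}$ satisfying $\psi(w^d)=\psi(w)^d$.

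The crux is to show that on each round circle $\psi$ acts on the angular coordinate as an isometry. Passing to the logarithmic coordinate $u=s+i\theta$, so that $w\mapsto w^d$ becomes $u\mapsto du$, the map $\psi$ becomes a homeomorphism commuting with $u\mapsto du$, and its imaginary part $B(s,\theta)$ is, for each fixed $s$, the lift of a circle homeomorphism; hence the oscillation in $\theta$ over one period of $C(s,\theta):=B(s,\theta)-\theta$ is at most $2\pi$ for every $s$. The commutation relation gives $C(ds,d\theta)=d\,C(s,\theta)$, whence $d^{n}\,\mathrm{osc}_\theta\,C(s,\cdot)=\mathrm{osc}_\theta\,C(d^{n}s,\cdot)\le 2\pi$ for all $n$, where $d^{n}s$ stays in the domain; letting $n\to\infty$ forces $\mathrm{osc}_\theta\,C(s,\cdot)=0$. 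Thus $B(s,\theta)=\theta+c(s)$ is a rotation (the orientation-reversing case gives $-\theta+c(s)$ and is identical), so $\psi$ preserves $|d\theta|=|\omega|$ on each circle, and the identity $\int_\gamma|\omega_f|=\int_{\phi(\gamma)}|\omega_g|$ holds for every arc $\gamma$ with $L>M(f)$. This maximum-principle/oscillation-scaling step is the main obstacle, as it is what upgrades a merely topological conjugacy to an angular isometry near infinity.

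Finally I would descend to an arbitrary level $L>0$. Given $\gamma\subset\{G_f=L\}$, choose $n$ so large that $d^{n}L>M(f)$ and $\Lambda(d^{n}L)=d^{n}\Lambda(L)>M(g)$. From $f^*\omega_f=d\,\omega_f$ one gets $\int_{f^{n}\gamma}|\omega_f|=d^{n}\int_\gamma|\omega_f|$, and likewise $\int_{g^{n}(\phi\gamma)}|\omega_g|=d^{n}\int_{\phi\gamma}|\omega_g|$; these are pullbacks of $\omega_f,\omega_g$ by $f^{n},g^{n}$, so no differentiability of $\phi$ is used. The conjugacy relation $g^{n}\circ\phi=\phi\circ f^{n}$ identifies the parametrized paths $g^{n}(\phi\gamma)$ and $\phi(f^{n}\gamma)$, while $f^{n}\gamma$ lies on the circle $\{G_f=d^{n}L\}$. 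Subdividing $f^{n}\gamma$ into finitely many arcs on which $f^{n}$ is injective and applying the high-level identity of the previous step to each piece yields $\int_{\phi(f^{n}\gamma)}|\omega_g|=\int_{f^{n}\gamma}|\omega_f|$. Combining these,
$$d^{n}\int_{\phi\gamma}|\omega_g|=\int_{g^{n}(\phi\gamma)}|\omega_g|=\int_{\phi(f^{n}\gamma)}|\omega_g|=\int_{f^{n}\gamma}|\omega_f|=d^{n}\int_\gamma|\omega_f|,$$
and dividing by $d^{n}$ completes the proof.
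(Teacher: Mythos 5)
Your proof is correct, but it takes a genuinely different route from the paper's. The paper's argument is a one-paragraph counting trick applied directly to the arc $\gamma$ at its own level: for each $k$, the integer part $I_k(f,\gamma)$ of $\frac{1}{2\pi}\int_\gamma|(f^k)^*\omega_f|$ is the number of full turns the image path $f^k\circ\gamma$ makes around its level curve, a purely topological quantity, so the conjugacy forces $I_k(f,\gamma)=I_k(g,\phi(\gamma))$; since $(f^k)^*\omega_f=d^k\omega_f$, one has $\int_\gamma|\omega_f|=\lim_{k}2\pi I_k(f,\gamma)/d^k$, and the identity follows at once, at any level and with no coordinates. Both arguments rest on the same mechanism --- the expansion factor $d^k$ kills a bounded topological error (the fractional part, bounded by $1$, in the paper; the oscillation of the angular displacement, bounded by $2\pi$, in yours) --- but your implementation is a two-step rigidity-plus-descent scheme: you first prove the stronger statement that in B\"ottcher coordinates the conjugacy is a rigid rotation on every circle near infinity, and then pull this back to arbitrary levels using $(f^n)^*\omega_f=d^n\omega_f$ and $g^n\circ\phi=\phi\circ f^n$. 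What your route buys is precisely this structural byproduct: the normal form $(s,\theta)\mapsto(\Lambda(s),\pm\theta+c(s))$ near infinity is exactly the stretching/turning decomposition $(S_\phi,T_\phi)$ that the paper introduces afterwards in the Branner--Hubbard motion subsection, so your proof of the Lemma already contains that fact; what the paper's route buys is brevity and uniformity, since it needs no B\"ottcher coordinates, no separate treatment of high levels, and works verbatim at critical levels. Two points you leave implicit, both harmless: the lifted commutation relation only gives $B(ds,d\theta)=dB(s,\theta)+2\pi m$ for some constant integer $m$ (this does not affect oscillations, but it should be said), and when you subdivide into pieces on which $f^n$ is injective you must first cut $\gamma$ at the finitely many critical points of $f^n$ lying on it, since $f^n$ can fold a level arc that passes straight through such a point.
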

\begin{proof} For  $k\geq0$, let $I_k(f,\gamma)$ (resp.  $I_k(g,\phi(\gamma))$) be the integer part of $\frac{1}{2\pi}\int_{\gamma}|(f^k)^*\omega_f|$ (resp. $\frac{1}{2\pi}\int_{\phi(\gamma)}|(g^k)^*\omega_g|$). Since $\phi$ is a conjugacy,  we have $I_k(f,\gamma)=I_k(g,\phi(\gamma))$ for all $k\geq0$. Thus
$$\int_{\gamma}|\omega_f|=\lim_{k\rightarrow\infty} \frac{2\pi I_k(f,\gamma)}{d^k}=\lim_{k\rightarrow\infty} \frac{2\pi I_k(g,\phi(\gamma))}{d^k}=\int_{\phi(\gamma)}|\omega_g|.$$ \hfill \end{proof}

\subsection{Branner-Hubbard  motion}

In the rest of the paper, we assume $C(f)\cap X(f)\neq \emptyset$. In that case
$M(f)>0$. The fundamental annulus is
$A(f)=\{z\in\mathbb{C}: M(f)<G_f(z)\leq dM(f)\}$.  Set $\ell_0=M(f)$. The critical orbits meet $A(f)$ at the level sets
$\{G_f=\ell_j\}, 1\leq j \leq N$ with $\ell_0<\ell_1<\cdots<\ell_N=d\ell_0$. These level sets decompose  $A(f)$  into $N$ subannuli
$A_j=\{z\in\mathbb{C}:\ell_{i-1}<G_f(z)<\ell_j\}, 1\leq j \leq N$. Note that some level set $\{G_f=\ell_k\}$ may meet at least two critical orbits.

By Lemma \ref{level}, any conjugacy $\phi$ between $(f,X(f))$ and $(g, X(g))$ can induce a map $S_\phi: \mathbb{R}_+\rightarrow\mathbb{R}_+$ such that
$S_\phi\circ G_f=G_g\circ \phi$ on $X(f)$. It is  monotone increasing and satisfies $S_\phi(dt)=dS_\phi(t)$ for all $t>0$.

We remark that by identifying $t$ and $dt$ in $\mathbb{R}_+$, the map $S_\phi$ descends to  a circle homeomorphism  $\tau_\phi: \mathbb{S}\rightarrow\mathbb{S}$.

By Lemma \ref{angular}, when $t>M(f)$, the angular difference between $\phi_f(z)$ and $\phi_g(\phi(z))$ is a constant for all $z\in \{G_f=t\}$.
 Thus  $\phi$ induces a map $T_\phi: (M(f),+\infty)\rightarrow\mathbb{R}$ such that  $T_\phi(G_f(z))=\arg\phi_g(\phi(z))-\arg\phi_f(z)$ when $G_f(z)>M(f)$. The map $T_\phi$ satisfies $T_\phi(dt)=dT_\phi(t)$ for all $t>M(f)$. We can extend this map to a map from $\mathbb{R}_+$ to $\mathbb{R}$, still denoted by $T_\phi$, such that  $T_\phi(dt)=dT_\phi(t)$ for all $t>0$. Following Branner and Hubbard \cite{BH1}, we call $S_\phi$ the {\it stretching} part of $\phi$ and $T_\phi$ the {\it turning} part of $\phi$.

 Note  that for any two conjugacies $\phi,\varphi$ between   $(f,X(f))$ and $(g, X(g))$, one has   $S_\phi(\ell_j)=S_\varphi(\ell_j), \ \forall 1\leq j \leq N.$
 So the homotopy classes of conjugacies are determined by their turning parts.
The following fact is immediate:

\begin{lem}\label{c-equi} Let $\phi,\varphi$ be two conjugacies between   $(f,X(f))$ and $(g, X(g))$. Then  $[\phi]=[\varphi]$ if and only if
  $$T_\phi(\ell_j)=T_\varphi(\ell_j), \ \forall 1\leq j \leq N .$$
\end{lem}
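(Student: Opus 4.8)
The plan is to treat the two implications separately and to push everything onto the escaping region $\Omega_f=\{G_f>M(f)\}$, where the B\"ottcher coordinate makes the turning part literal. Throughout I use three facts: a conjugacy carries $P(f)\cap X(f)$ into $P(g)\cap X(g)$; the postcritical points of $f$ in $X(f)$ lie at levels of the form $d^m\ell_j$; and the functional equations $S_\phi(dt)=dS_\phi(t)$, $T_\phi(dt)=dT_\phi(t)$ (and likewise for $\varphi$) propagate the values of $S_\phi,T_\phi$ at the $\ell_j$ to all these levels.

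For the ``only if'' direction, suppose $[\phi]=[\varphi]$, witnessed by a homotopy $H_s$ rel $P(f)\cap X(f)$ with $H_0=\phi$, $H_1=\varphi$. I would restrict to $\Omega_f$ and pass to the logarithmic B\"ottcher coordinate $\zeta=\log\phi_f$, which presents $\Omega_f$ as the half-plane $\{\Re\zeta>M(f)\}$ in the universal cover (so that the external angle $\Im\zeta$ is a genuine real coordinate) and conjugates $f$ to $\zeta\mapsto d\zeta$. By Lemmas \ref{level} and \ref{angular} a conjugacy becomes $\zeta\mapsto S_\phi(\Re\zeta)+i\bigl(\Im\zeta+T_\phi(\Re\zeta)\bigr)$, so its value at a lifted postcritical point of level $\ell_j$ records $T_\phi(\ell_j)$ as an honest real number. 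Lifting $H_s$ to the universal cover rel the (discrete) lift of $P(f)\cap X(f)$ keeps the image of each such point constant in $s$; comparing the endpoints gives $T_\phi(\ell_j)=T_\varphi(\ell_j)$ for every $j$, with no ambiguity modulo $2\pi$ precisely because we worked upstairs.

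For the ``if'' direction, assume $T_\phi(\ell_j)=T_\varphi(\ell_j)$ for all $j$. Combined with $S_\phi(\ell_j)=S_\varphi(\ell_j)$ (the remark preceding the statement) and the functional equations, this yields $S_\phi=S_\varphi$ and $T_\phi=T_\varphi$ at every postcritical level $d^m\ell_j$. I would then connect $\phi$ to $\varphi$ through the explicit family with data $S^s=(1-s)S_\phi+sS_\varphi$ and $T^s=(1-s)T_\phi+sT_\varphi$; these convex combinations again satisfy the two functional equations and agree with the common data of $\phi$ and $\varphi$ at every postcritical level, for all $s$. On $\Omega_f$ the family is realized in the B\"ottcher coordinate by $\zeta\mapsto S^s(\Re\zeta)+i\bigl(\Im\zeta+T^s(\Re\zeta)\bigr)$, fixing every escaping postcritical point throughout. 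Propagating inward by the conjugacy relation, i.e. pulling each map back through the branched covering $f$ (and its counterpart $g$) over the exhaustion $\{G_f>M(f)/d^n\}\uparrow X(f)$, extends the family to all of $X(f)$ and gives a homotopy from $\phi$ to $\varphi$ that is constant on $P(f)\cap X(f)$, whence $[\phi]=[\varphi]$.

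The step I expect to be the main obstacle is this inward propagation: the intermediate maps $S^s,T^s$ need not come from conjugacies, so the pullback through $f$ must be arranged to stay single-valued and continuous across the escaping critical points, where $f$ is branched. The equation $T^s(dt)=dT^s(t)$ is exactly the compatibility making the twist equivariant under the degree-$d$ angle multiplication, so that the lifts match around each branch point; checking this consistency, and joint continuity in $(z,s)$ up to $G_f\to 0$, is the only delicate point. The forward direction is comparatively soft, the sole subtlety being to work in the universal cover so that the turnings are recovered on the nose rather than merely modulo $2\pi$.
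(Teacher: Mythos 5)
The paper itself gives no argument for this lemma (it is declared ``immediate''), so your proposal has to stand on its own; the ``only if'' half, as written, has a genuine gap. First, the intermediate maps $H_s$ of a homotopy rel $P(f)\cap X(f)$ are not conjugacies: they need not respect Green levels, so in general $H_s(\Omega_f)\not\subset\Omega_g$ (writing $\Omega_f=\{G_f>M(f)\}$, $\Omega_g=\{G_g>M(g)\}$). Hence $H_s$ can be neither expressed in nor lifted to the logarithmic B\"ottcher half-planes, and your lifting step is undefined as stated; one must lift to the universal cover of $X(g)$ itself, where the preimage of $\Omega_g$ is a union of half-planes because $\Omega_g\hookrightarrow X(g)$ is $\pi_1$-injective (this is where $K(g)\neq\emptyset$ enters). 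Second, and more seriously: even with a correct lift, the terminal map $\widetilde H_1$ is only \emph{some} lift of $\varphi$, namely the standard one composed with an unknown deck translation $\zeta\mapsto\zeta+2\pi i m$. Comparing endpoints at one lifted postcritical point per level therefore yields $T_\phi(\ell_j)-T_\varphi(\ell_j)=2\pi m$ with the same unknown integer $m$ for all $j$, not $0$; ``working upstairs'' does not remove this ambiguity, because the ambiguity \emph{is} a deck transformation upstairs. Since the mod-$2\pi$ equality is trivial anyway (a homotopy rel $P(f)\cap X(f)$ forces $\phi=\varphi$ on $P(f)\cap X(f)$), killing this last $2\pi\mathbb{Z}$-ambiguity is the entire content of the direction, and it is missing. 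The fix is short but essential: compare also at a lift of $f(p_j)$, of level $d\ell_j$, lying in the same half-plane component; this gives $T_\phi(d\ell_j)-T_\varphi(d\ell_j)=2\pi m$, while the functional equation $T(dt)=dT(t)$ makes the left side equal to $d\bigl(T_\phi(\ell_j)-T_\varphi(\ell_j)\bigr)=2\pi dm$, whence $(d-1)m=0$ and $m=0$. Equivalently: for an arc $\gamma_j$ along the ray from $p_j$ to $f(p_j)$, the loop $\phi(\gamma_j)\cdot\varphi(\gamma_j)^{-1}$ is null-homotopic in $X(g)$, hence has winding number $0$ around $K(g)$, and that winding number equals $\frac{d-1}{2\pi}\bigl(T_\phi(\ell_j)-T_\varphi(\ell_j)\bigr)$.

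Your ``if'' half is the right strategy and is completable essentially as you outline, with one correction of mechanism: what makes the inward lifts match at branch points is not the equation $T^s(dt)=dT^s(t)$ alone (that only guarantees the interpolating maps semi-conjugate the dynamics on $\Omega_f$), but the fact that $(S^s,T^s)$ agrees with the common data of $\phi$ and $\varphi$ at every postcritical level, so that each $h_s$ coincides with $\phi$ on those level sets and in particular sends each critical value of $f$ to the corresponding critical value of $g$ with matching local degree. Granting that, the covering-homotopy theorem lifts the family through $g$ over the exhaustion $\{G_f>M(f)/d^n\}$ away from branch points, a removable-puncture argument extends across them, and agreement with $\phi$ at postcritical points of level $\le M(f)$ follows from discreteness of the fibers of $g^m$ plus continuity in $s$. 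So: correct strategy on both halves, but the ``only if'' direction as written proves only the (trivial) congruence modulo $2\pi$, not the exact equality the lemma asserts.
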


In fact $T_\phi(\ell_j)-T_\varphi(\ell_j)\in \frac{2\pi}{k_j}\mathbb{Z}$ for some integer $k_j\geq1$, which is the fold of  symmetry of the level set $\{G_f=\ell_j\}$.

In the following, we assume $\phi$ is  a quasiconformal conjugacy between   $(f,X(f))$ and $(g, X(g))$, then $\phi$ is differentiable almost everywhere in $X(f)$. This implies
$S'_\phi(t)$ and $T'_\phi(t)$ exist for a.e $t\in\mathbb{R}_+$. By Lemma \ref{level}, we have  $K(\phi)^{-1}\leq S'_\phi(t)\leq K(\phi)$ a.e $t\in \mathbb{R}_+$.

\begin{lem}\label{beltrami} Let $\phi$ be a quasiconformal conjugacy between   $(f,X(f))$ and $(g, X(g))$. Then we have
%$$\frac{\overline{\partial }\phi}{\partial \phi}=\frac{Z(G_f(z))-1}{Z(G_f(z))+1}\frac{\overline{\partial }G_f}{\partial G_f},$$
%where $Z:\mathbb{R}_+\rightarrow \mathbb{H}$ is defined by $Z(t)=S'_\phi(t)+iT'_\phi(t)$.
 $$\frac{\overline{\partial }\phi}{\partial \phi}=\frac{S_\phi'\circ G_f+iT_\phi'\circ G_f-1}{S_\phi'\circ G_f+iT_\phi'\circ G_f+1}\frac{\overline{\partial }G_f}{\partial G_f}.$$
\end{lem}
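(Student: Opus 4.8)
The plan is to first establish the formula on the B\"ottcher region $\Omega=\{G_f>M(f)\}$, where everything can be written in a holomorphic coordinate, and then to spread it to all of $X(f)$ using the dynamics. On $\Omega$ the B\"ottcher map $\phi_f$ is an isomorphism, so $\psi_f:=\log\phi_f=G_f+i\theta_f$ is a holomorphic coordinate (single valued up to $2\pi i$), whose real part is $G_f$ and whose imaginary part is the angle $\theta_f=\arg\phi_f$. Since $\psi_f$ is holomorphic, $\omega_f=2\partial G_f=\psi_f'\,dz=d\psi_f$; in particular $\partial G_f=\tfrac12\psi_f'\,dz$. The same holds for $g$ with the coordinate $\psi_g=G_g+i\theta_g$, and I will read $\phi$ in these coordinates, i.e.\ study $F=\psi_g\circ\phi\circ\psi_f^{-1}$ on the image strip.

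By the definitions of the stretching and turning parts, $G_g\circ\phi=S_\phi\circ G_f$ and $\theta_g\circ\phi=\theta_f+T_\phi\circ G_f$ on $\Omega$. Writing $w=u+iv$ with $u=G_f,\ v=\theta_f$, this says exactly
\[
F(u+iv)=S_\phi(u)+i\bigl(v+T_\phi(u)\bigr).
\]
Then $F_u=S_\phi'(u)+iT_\phi'(u)$ and $F_v=i$, so a one line computation of $\partial_wF=\tfrac12(F_u-iF_v)$ and $\partial_{\bar w}F=\tfrac12(F_u+iF_v)$ gives the Beltrami coefficient
\[
\mu_F=\frac{\partial_{\bar w}F}{\partial_wF}=\frac{S_\phi'(u)+iT_\phi'(u)-1}{S_\phi'(u)+iT_\phi'(u)+1}.
\]
Transporting back via $\phi=\psi_g^{-1}\circ F\circ\psi_f$: post-composition by the holomorphic $\psi_g^{-1}$ leaves the Beltrami coefficient unchanged, while pre-composition by the holomorphic $\psi_f$ multiplies it (at the level of forms) by $\overline{\psi_f'}/\psi_f'\cdot d\bar z/dz$. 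Because $\partial G_f=\tfrac12\psi_f'\,dz$ forces $\overline{\psi_f'}/\psi_f'\cdot d\bar z/dz=\overline{\partial}G_f/\partial G_f$ and $u=G_f$, this is precisely the asserted identity on $\Omega$.

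It remains to remove the restriction to $\Omega$. Both sides of the claimed equality are Beltrami forms, and I will check that each is invariant under pullback by $f$, so that an identity on $\Omega$ propagates to the whole grand orbit of $\Omega$. For the left side this is immediate from $\phi\circ f=g\circ\phi$ with $f,g$ holomorphic, which gives $\mu_\phi(z)=\mu_\phi(f(z))\,\overline{f'(z)}/f'(z)$. For the right side, $G_f\circ f=dG_f$ gives $\partial_zG_f(f(z))\,f'(z)=d\,\partial_zG_f(z)$, whence the form $\overline{\partial}G_f/\partial G_f$ obeys the same rule; and the scalar factor is $f$-invariant because $S_\phi(dt)=dS_\phi(t)$ and $T_\phi(dt)=dT_\phi(t)$ force $S_\phi'(dt)=S_\phi'(t)$ and $T_\phi'(dt)=T_\phi'(t)$, so $S_\phi'\circ G_f$ and $T_\phi'\circ G_f$ are constant along $f$-orbits. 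Since $d^nG_f(z)\to\infty$, every point of $X(f)$ has a forward image in $\Omega$, and the escaping critical points together with their preimages (the singular set of $\omega_f$, and the set where $\phi$ or $S_\phi,T_\phi$ fail to be differentiable) form a null set; hence the two $f$-invariant forms, equal a.e.\ on $\Omega$, are equal a.e.\ on $X(f)$.

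The substantive step is the model computation of $\mu_F$ in the log-B\"ottcher coordinate; the main things to watch are the bookkeeping of the holomorphic factor $\overline{\partial}G_f/\partial G_f$ and the almost-everywhere hypotheses (quasiconformal differentiability and the existence of $S_\phi',T_\phi'$ recorded before the statement), together with avoiding the null singular set of $\omega_f$ when invoking the dynamical invariance.
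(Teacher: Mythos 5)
Your proof is correct and follows essentially the same route as the paper: verify the identity on the B\"ottcher region $\{G_f>M(f)\}$, then propagate it to all of $X(f)$ using the $f$-invariance of both Beltrami forms. The differences are minor but welcome: you do the model computation in the logarithmic B\"ottcher coordinate (where the paper instead differentiates $e^{S_\phi\circ G_f+iT_\phi\circ G_f}\,\phi_f/|\phi_f|$ directly in the $z$-coordinate), and you explicitly justify the invariance and null-set bookkeeping that the paper compresses into the single assertion that ``both sides are $f$-invariant.''
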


\begin{proof} It suffices to verify the equation in $\{z\in \mathbb{C}: G_f(z)>M(f)\}$ since both sides are $f$-invariant.

When $G_f(z)>M(f)$, we have
%$$\phi_g\circ\phi \circ \phi_f^{-1}(e^{G_f(z)+i\theta})=e^{S_\phi(G_f(z))+i\theta+iT_\phi(G_f(z))}.$$
%Note that  $z=\phi_f^{-1}(e^{G_f(z)+i\theta})$, then $e^{i\theta}=\phi_f(z)/|\phi_f(z)|$ and
$$\phi_g(\phi(z))=e^{S_\phi(G_f(z))+iT_\phi(G_f(z))}\frac{\phi_f(z)}{|\phi_f(z)|}.$$

Set $E(z)=e^{S_\phi(G_f(z))+iT_\phi(G_f(z))}$, then
$$\frac{\partial (\phi_g\circ\phi)}{\partial \overline{z}}=E(z)\Big[(S'_\phi(G_f(z))+iT'_\phi(G_f(z)))
\frac{\phi_f(z)}{|\phi_f(z)|}\frac{\partial G_f}{\partial \overline{z}}+\frac{\partial}{\partial \overline{z}}\Big(\frac{\phi_f(z)}{|\phi_f(z)|}\Big)\Big],$$
$$\frac{\partial (\phi_g\circ\phi)}{\partial {z}}=E(z)\Big[(S'_\phi(G_f(z))+iT'_\phi(G_f(z)))
\frac{\phi_f(z)}{|\phi_f(z)|}\frac{\partial G_f}{\partial {z}}+\frac{\partial}{\partial z}\Big(\frac{\phi_f(z)}{|\phi_f(z)|}\Big)\Big].$$

By calculation,
%$$\frac{\partial G_f(z)}{\partial \overline{z}}=\frac{\overline{\phi'_f(z)}}{2\overline{\phi_f(z)}}, \frac{\partial G_f(z)}{\partial {z}}=\frac{{\phi'_f(z)}}{2{\phi_f(z)}},$$
%and
$$\frac{\partial}{\partial \overline{z}}\Big(\frac{\phi_f(z)}{|\phi_f(z)|}\Big)=-\frac{\phi_f(z)}{|\phi_f(z)|}\frac{\partial G_f(z)}{\partial \overline{z}},
\frac{\partial}{\partial {z}}\Big(\frac{\phi_f(z)}{|\phi_f(z)|}\Big)=\frac{\phi_f(z)}{|\phi_f(z)|}\frac{\partial G_f(z)}{\partial {z}}.$$

We have

$$\frac{\overline{\partial }\phi}{\partial \phi}=\frac{\overline{\partial }(\phi_g\circ\phi)}{\partial (\phi_g\circ\phi)}=\frac{S_\phi'(G_f(z))+iT_\phi'(G_f(z))-1}{S_\phi'(G_f(z))+iT_\phi'(G_f(z))+1}\frac{\overline{\partial }G_f}{\partial G_f}.$$ \end{proof}

%Lemma \ref{beltrami} implies the conjugacy $\phi$ satisfies the Branner-Hubbard type motion:

\subsection{Minimal dilatation and uniqueness}

%In the fundamental annulus $A=\{G_f^0<G_f(z)<G_f^N\}$, the critical levels are labeled by $G_f^0<G_f^1<\cdots<G_f^N=dG_f^0$.
Let  $\phi$ be the  conjugacy between $(f,X(f))$ and $(g, X(g))$.
 %It's equivalent class is determined by the values of $S_\phi, T_\phi$ at the points $G_f^0,G_f^1,\cdots,G_f^{N-1}$. Set $G_g^j=S_\phi(G_f^j)$ and $\Theta_\phi^j=T_\phi(G_f^j)$.
Set
$$\tau^*_j=\frac{S_\phi(\ell_j)-S_\phi(\ell_{j-1})}{\ell_j-\ell_{j-1}}+i\frac{T_\phi(\ell_j)-T_\phi(\ell_{j-1})}{\ell_j-\ell_{j-1}},\
\delta_{[\phi]}^j=\Big|\frac{\tau^*_j-1}{\tau^*_j+1}\Big|, \  j=1,\cdots,N.$$

%\begin{lem}\label{greater} Let $\phi$ be a quasiconformal conjugacy between   $(f,X(f))$ and $(g, X(g))$. Then we have
% $\|\mu_\phi\|\geq\max_{1\leq j\leq N}\delta_{[\phi]}^j.$
%\end{lem}
%\begin{proof} Let $\varphi_t:\mathbb{C}\rightarrow\mathbb{C}$ be the holomorphic family of  quasiconformal maps, solving the Beltrami equation
%$$\frac{\overline{\partial }\varphi_t}{\partial \varphi_t}=t\frac{\mu_\phi}{\|\mu_\phi\|}, |t|<1,$$
%normalized so that $f_t=\varphi_t\circ f\circ\varphi_t^{-1}\in\mathcal{P}_d$ and $\varphi_0=id$.
%Then $(g, X(g))$ is conformally conjugate to $(f_{\|\mu_\phi\|}, X(f_{\|\mu_\phi\|}))$ and
%$G_{f_{\|\mu_\phi\|}}\circ \phi_{\|\mu_\phi\|}=G_g\circ \phi$ and $T_\phi(G_f^j)-T_\phi(G_f^{j-1})=
%T_{\varphi_{\|\mu_\phi\|}}(G_f^j)-T_{\varphi_{\|\mu_\phi\|}}(G_f^{j-1})$ for $1\leq j\leq N$.

%For each $j=0,\cdots, N-1$, we choose a point $z$ such that $G_f(z)=G_f^j$. Then the function $\Phi_j:\mathbb{D}\rightarrow\mathbb{C}-\overline{\mathbb{D}}$
%defined by $\Phi_j(t)=\phi_{f_t}(z)$ is holomorphic. Let $I=[0,\|\mu_\phi\|]$ and $\gamma$ be the geodesic in $\mathbb{C}-\overline{\mathbb{D}}$ homotopic to  $\Phi_j(I)$ rel the two end points, and let $\rho$ be the hyperbolic metric in $\mathbb{C}-\overline{\mathbb{D}}$.  By Schwarz lemma, we have
%$$d_\mathbb{D}(0,\|\mu_\phi\|)\geq\int_{I}\Phi_j^*\rho\geq \int_{\gamma}\rho.$$
%Equivalently, $$\|\mu_\phi\|\geq \tanh\Big(\frac{1}{2}\int_{\gamma}\rho\Big)= \delta_{[\phi]}^j.$$
%\end{proof}

%Then we have
%$$\delta_{[\phi]}((f,X(f)), (g,X(g)))\leq \max_{1\leq j\leq N}\delta_{[\phi]}^j.$$
Note that $\tau^*_j$ depends only on the homotopy class $[\phi]$.
%Now, we formulate the following:

\begin{thm}\label{main} Let $\phi$ be a conjugacy between   $(f,X(f))$ and $(g, X(g))$. Then
$$\delta_{[\phi]}= \max_{1\leq j\leq N}\delta_{[\phi]}^j.$$
The extremal quasiconformal conjugacy is unique if and only if
$$\delta_{[\phi]}^1=\cdots=\delta_{[\phi]}^N.$$
\end{thm}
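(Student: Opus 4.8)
The plan is to reduce everything, via Lemma~\ref{beltrami}, to a one–dimensional extremal problem for the complex function $w(t)=S'_\psi(t)+iT'_\psi(t)$ and then to exploit the convexity of the Cayley image of the half plane. Since $G_f$ is harmonic we have $|\overline{\partial}G_f/\partial G_f|=1$, so Lemma~\ref{beltrami} gives, for every quasiconformal conjugacy $\psi$,
$$|\mu_\psi(z)|=\left|\frac{w(t)-1}{w(t)+1}\right|,\qquad t=G_f(z).$$
Because $S'_\psi>0$, the value $w(t)$ lies in the right half plane $\mathbb{H}$, and the Cayley map $w\mapsto (w-1)/(w+1)$ carries $\mathbb{H}$ isometrically onto $\mathbb{D}$, sending $1$ to $0$. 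Hence for each $\rho\in(0,1)$ the sublevel set $D_\rho=\{w\in\mathbb{H}:|(w-1)/(w+1)|\le\rho\}$ is the preimage of the round disk $\{|\zeta|\le\rho\}\Subset\mathbb{D}$, so it is itself a closed round (hyperbolic) disk in $\mathbb{H}$; in particular it is convex and strictly convex. The arithmetic observation that drives the proof is that, since $S_\psi,T_\psi$ are absolutely continuous with increments prescribed over $[\ell_{j-1},\ell_j]$, the number $\tau^*_j$ is exactly the Lebesgue average $(\ell_j-\ell_{j-1})^{-1}\int_{\ell_{j-1}}^{\ell_j}w(t)\,dt$, i.e.\ the barycenter of $w$ over the $j$-th interval.

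For the lower bound I would argue that if $\psi\in[\phi]$ is quasiconformal with $\rho=\|\mu_\psi\|$, then $w(t)\in\overline{D_\rho}$ for a.e.\ $t$, so by convexity the barycenter $\tau^*_j$ also lies in $\overline{D_\rho}$, whence $\delta_{[\phi]}^{j}=|(\tau^*_j-1)/(\tau^*_j+1)|\le\rho$ for every $j$; taking the infimum over $\psi$ gives $\max_j\delta_{[\phi]}^{j}\le\delta_{[\phi]}$. For the matching upper bound I would write down the explicit candidate $\varphi_0$ whose stretching $S$ and turning $T$ are affine on each $[\ell_{j-1},\ell_j]$ and are then extended to $\mathbb{R}_+$ by the functional equations $S(dt)=dS(t)$, $T(dt)=dT(t)$. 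By the Branner--Hubbard motion this admissible pair is realized by a quasiconformal conjugacy, and it lies in $[\phi]$ since $T_{\varphi_0}(\ell_j)=T_\phi(\ell_j)$ for all $j$ (Lemma~\ref{c-equi}). For $\varphi_0$ one has $w\equiv\tau^*_j$ on the annulus $A_j$, so $|\mu_{\varphi_0}|=\delta_{[\phi]}^{j}$ there and $\|\mu_{\varphi_0}\|=\max_j\delta_{[\phi]}^{j}$ by the $f$-invariance of $|\mu_{\varphi_0}|$. This yields $\delta_{[\phi]}=\max_j\delta_{[\phi]}^{j}$.

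For the uniqueness statement I would use strict convexity. Assume first $\delta_{[\phi]}^{1}=\cdots=\delta_{[\phi]}^{N}=\rho$. Any extremal $\psi$ has $w(t)\in\overline{D_\rho}$ a.e.\ with barycenter $\tau^*_j\in\partial D_\rho$ over each $A_j$; since every boundary point of the disk $\overline{D_\rho}$ is an extreme point, a probability average can land on $\partial D_\rho$ only if the underlying measure is a point mass, forcing $w(t)=\tau^*_j$ a.e.\ on $A_j$. Thus $S'_\psi,T'_\psi$, and hence $\mu_\psi$, are determined a.e.\ and coincide with those of $\varphi_0$; I would then conclude $\psi=\varphi_0$ from the rigidity that $\psi\circ\varphi_0^{-1}$ is a conformal automorphism of $X(g)$ commuting with $g$ and homotopic to the identity, and is therefore trivial. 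Conversely, if some $\delta_{[\phi]}^{j_0}<\rho=\max_j\delta_{[\phi]}^{j}$, then $\tau^*_{j_0}$ is interior to $D_\rho$, so I can perturb $\varphi_0$ on $A_{j_0}$ alone, replacing the constant value $w\equiv\tau^*_{j_0}$ by a nonconstant function with the same average and values still inside $\overline{D_\rho}$, without altering the endpoint data $S(\ell_{j_0}),T(\ell_{j_0})$ or the norm $\|\mu\|=\rho$ (which is attained on an annulus with $\delta=\rho$). This produces a second extremal conjugacy, so uniqueness fails.

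The main obstacle I anticipate is the final rigidity step, passing from equality of the Beltrami forms to equality of the maps, together with the precise justification that a conformal self-conjugacy of $(g,X(g))$ homotopic to the identity is the identity; this is where I expect to invoke the Reich--Strebel uniqueness theorem advertised in the introduction, or alternatively the finiteness and homotopy-rigidity of the symmetry group of the basin. A secondary technical point is confirming that the Branner--Hubbard construction genuinely realizes the prescribed piecewise-affine $(S,T)$ data by a conjugacy onto the same $g$ in the class $[\phi]$ (one needs $\Re\tau^*_j>0$, which holds because $S$ is increasing), and that the perturbation used in the necessity direction keeps $w$ inside $\overline{D_\rho}$ while remaining admissible.
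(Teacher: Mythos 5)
Your proposal is correct in its essentials, and the engine driving it is genuinely different from the paper's. The paper constructs the same piecewise-affine candidate (your $\varphi_0$, its $\psi$), observes that on each $A_j$ its Beltrami form is of Teichm\"uller type $k_j\overline{q_j}/|q_j|$ for the integrable holomorphic quadratic differential $q_j=\frac{\overline{\tau^*_j}-1}{\overline{\tau^*_j}+1}\bigl(\frac{\partial G_f}{\partial z}\,dz\bigr)^2$, and then invokes the Reich--Strebel theorem (Theorem \ref{uni}) annulus by annulus: $\psi|_{A_j}$ is the unique dilatation minimizer among \emph{all} quasiconformal maps of $A_j$ with its boundary values, which yields the formula for $\delta_{[\phi]}$ and the uniqueness dichotomy simultaneously. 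You instead exploit the rigidity of the competitor class itself: by Lemma \ref{beltrami} every quasiconformal conjugacy has Beltrami coefficient determined by $w=S'+iT'$, membership in $[\phi]$ fixes the barycenters $\tau^*_j$ of $w$ (Lemma \ref{c-equi} plus the invariance of the $S(\ell_j)$), and the problem collapses to a one-variable extremal problem settled by convexity and strict convexity of the Cayley disks $D_\rho$. What each approach buys: Reich--Strebel gives the paper a stronger local statement (extremality against all maps with the given boundary values, not only conjugacies), at the cost of quadratic-differential machinery and of an unstated step --- that an arbitrary $\varphi\in[\phi]$ restricted to $\overline{A_j}$ shares boundary values with $\psi$ and is homotopic to it rel $\partial A_j$, which needs Lemma \ref{c-equi} and interpolation of the $(S,T)$ data. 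Your route is elementary and self-contained and makes transparent why the answer is a maximum and why uniqueness holds exactly when all $\delta^j_{[\phi]}$ coincide; but it works only because Lemma \ref{beltrami} constrains every competitor, a rigidity the Reich--Strebel argument never needs.

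Two points you should shore up. First, the barycenter identity $\tau^*_j=(\ell_j-\ell_{j-1})^{-1}\int_{\ell_{j-1}}^{\ell_j}w(t)\,dt$ needs absolute continuity of $S_\psi$ and $T_\psi$, which you assert but do not justify; it is not a consequence of a.e.\ differentiability alone (singular monotone functions show the fundamental theorem of calculus can fail). It does hold here: $S_\psi$ is bi-Lipschitz by Lemma \ref{level}, and for $T_\psi$ note that in logarithmic B\"ottcher coordinates the conjugacy is $(x,y)\mapsto(S_\psi(x),\,y+T_\psi(x))$, so the ACL property of quasiconformal maps forces $T_\psi$ to be absolutely continuous on compact subintervals of $(M(f),\infty)$, hence on all of $[\ell_0,\ell_N]$ via the functional equation $T_\psi(dt)=dT_\psi(t)$. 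Second, your closing rigidity step can be completed without any appeal to Reich--Strebel: if $\mu_\psi=\mu_{\varphi_0}$ a.e., then $\psi\circ\varphi_0^{-1}$ is a conformal self-conjugacy of $(g,X(g))$ in the homotopy class of the identity; Lemma \ref{beltrami} gives $S'+iT'=1$ for its data, so $S=\mathrm{id}$ and $T$ is constant, and since it fixes a postcritical point on a level set above $M(g)$ it equals the identity on an open set, hence everywhere by the identity theorem. (The remaining point --- that piecewise-affine or perturbed $(S,T)$ data with the prescribed values at the $\ell_j$ is realized by an actual conjugacy in $[\phi]$, via the Branner--Hubbard motion --- is taken for granted by the paper exactly as it is by you.)
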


%\begin{rmk} There are two possibilities about the extremal map:

%1.   There is no homotopy  class where the extremal map is unique.

%2.  There are infinitely many homotopy  classes where the extremal maps are unique.
%\end{rmk}

To prove  Theorem \ref{main},  we need the following result:

\begin{thm}[Reich-Strebel]\label{uni} Let $\phi: R_1\rightarrow R_2$ be a quasiconformal  map  between hyperbolic Riemann surfaces. If there exist a
 holomorphic quadratic differential $q$ on $R_1$ with $\int_{R_1}|q|<+\infty$ and a number $0\leq k<1$, such that $\mu_\phi=k{\overline{q}}/{|q|}$,
%$$\frac{\overline{\partial}\phi}{\partial \phi}=k\frac{\overline{q}}{|q|},$$
then every quasiconformal  map $\psi: R_1\rightarrow R_2, \psi\neq\phi$ homotopic to $\phi$ modulo the boundary satisfies $\|\mu_\psi\|>\|\mu_\phi\|$.
\end{thm}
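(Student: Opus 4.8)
The engine of the proof is the \emph{length--area} method, which yields the Reich--Strebel fundamental inequality; strict extremality then follows by analyzing its equality case. The plan is as follows. First I would reduce to a self-map homotopic to the identity: given a competitor $\psi$ homotopic to $\phi$ modulo the boundary, set $f=\psi\circ\phi^{-1}:R_2\to R_2$, which is quasiconformal and homotopic to $\mathrm{id}_{R_2}$ rel boundary. Because $\mu_\phi=k\overline q/|q|$, the map $\phi$ is a Teichm\"uller map; let $\hat q$ be its terminal holomorphic quadratic differential on $R_2$ (the image of $q$), which is again integrable, and in the natural parameters $\zeta=\int\sqrt q$ on $R_1$ and $w=\int\sqrt{\hat q}$ on $R_2$ the map $\phi$ is the affine stretch $w=K\,\mathrm{Re}\,\zeta+i\,\mathrm{Im}\,\zeta$, with $K=(1+k)/(1-k)$.

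The core step is to establish, for the self-map $f$ and the differential $\hat q$, the fundamental inequality
$$\int_{R_2}|\hat q|\ \le\ \int_{R_2}\frac{\bigl|\,1+\mu_f\,\hat q/|\hat q|\,\bigr|^2}{1-|\mu_f|^2}\,|\hat q|,$$
where the integration is against the area form $du\,dv$, $w=u+iv$. I would prove this by length--area on the horizontal foliation $\{\,\mathrm{Im}\,w=\mathrm{const}\,\}$ of $\hat q$: for almost every boundary-to-boundary leaf, its image under $f$ is homotopic to it rel the ideal boundary, so its $|\hat q|$-length is at least the leaf's $|\hat q|$-length; writing this length bound in terms of $\partial f$ and $\overline\partial f$, integrating over the transverse parameter, and applying the Cauchy--Schwarz inequality to the length integrals along leaves produces the displayed inequality after rearrangement. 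The delicate points here---the measure-zero critical graph of $\hat q$, closed versus recurrent trajectories, and the a.e.\ differentiability of $f$---are handled by the measurable-foliation formalism, and this is the \textbf{main obstacle} of the whole argument.

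With the fundamental inequality in hand, I would deduce $\|\mu_\phi\|\le\|\mu_\psi\|$. Changing variables back to $R_1$ via $\phi$ and substituting the composition formula expressing $\mu_f=\mu_{\psi\circ\phi^{-1}}$ through $\mu_\psi$ and $\mu_\phi=k\overline q/|q|$, the right-hand side becomes an integral against $|q|$ of a quantity controlled by $\|\mu_\psi\|=:k'$. Using $|\mu_\psi|\le k'$ pointwise together with the affine, hence constant-dilatation, form of $\phi$ in the natural parameters, the inequality collapses to $K\le K':=(1+k')/(1-k')$, that is $k=\|\mu_\phi\|\le\|\mu_\psi\|$. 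Thus $\phi$ is extremal in its class.

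Finally, for strictness I would run the equality case. If $\|\mu_\psi\|=k$, then every inequality above must be an equality: equality in Cauchy--Schwarz forces $\mu_f$ to be of Teichm\"uller form aligned with $\hat q$ of constant modulus, while equality in the leaf-length bound forces $f$ to carry horizontal leaves to horizontal leaves without increasing length. Combined with $f$ being homotopic to the identity and realizing the minimal dilatation, these constraints force $\mu_f=0$ almost everywhere, so $f$ is conformal. A conformal self-map of a hyperbolic Riemann surface homotopic to the identity rel boundary is the identity; hence $f=\mathrm{id}_{R_2}$ and $\psi=\phi$. Equivalently, if $\psi\neq\phi$ then $\|\mu_\psi\|>\|\mu_\phi\|$, which is the assertion.
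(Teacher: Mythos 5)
Your proposal is correct and takes essentially the same route as the paper: the paper supplies no independent argument, deferring to Reich--Strebel's original proof (``The proof is as in \cite{RS}, p.~380''), which is precisely the scheme you outline---reduction to a self-map homotopic to the identity rel boundary, the fundamental (length--area) inequality for the terminal quadratic differential, the pointwise estimate collapsing it to $K\leq K'$, and the equality-case analysis forcing the composition to be conformal and hence the identity. The only caveat is that your sketch leaves the fundamental inequality itself unproved (you rightly flag it as the main obstacle); that is exactly the content carried out in \cite{RS}.
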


The proof is as in \cite{RS}, p.380.

\

\noindent{\it Proof of Theorem \ref{main}.}  Note that every map $\psi\in [\phi]$ is determined by the restrictions of  $S_\psi, T_\psi$ in $[\ell_0,\ell_N]$.

We construct a quasiconformal conjugacy $\psi\in[\phi]$ such that
$$(S_\psi(\ell_j),T_\psi(\ell_j))=(S_\phi(\ell_j),T_\phi(\ell_j)), j=0,\cdots,N,$$ and
$S_\psi,T_\psi$ are linear in each interval $(\ell_{j-1},\ell_j)$.
 Then by Lemma \ref{beltrami},
 $$\frac{\overline{\partial }\psi}{\partial \psi}=\sum_{k\in \mathbb{Z}}\sum_{j=1}^N \frac{\tau^*_j-1}{\tau^*_j+1}\chi_{f^k(A_j)}\frac{\overline{\partial }G_f}{\partial G_f},$$
where $\chi_E$ is the characteristic function, defined so that $\chi_E(x)=1$ if $x\in E$ and  $\chi_E(x)=0$ if $x\notin E$.
%Then we have $\|\mu_\psi\|=\max_{1\leq j\leq N}\delta_{[\phi]}^j$. Thus $\delta_{[\phi]}\leq \max_{1\leq j\leq N}\delta_{[\phi]}^j.$
%By Lemma \ref{greater}, we actually have $\delta_{[\phi]}= \max_{1\leq j\leq N}\delta_{[\phi]}^j$.
%On the other hand, t

The map $\psi$  satisfies: $\mu_\psi|_{A_j}=k_j\overline{q_j}/|q_j|$, where $k_j=\delta_{[\phi]}^j$ and
$$q_j=\frac{\overline{\tau^*_j}-1}{\overline{\tau^*_j}+1}\Big(\frac{\partial G_f}{\partial z}dz\Big)^2$$
 is a holomorphic and integrable quadratic differential on
$A_j$. By Theorem \ref{uni}, the map $\psi|_{A_j}$ is the unique extremal map on $A_j$ modulo the boundary $\partial A_j$.
%So any quasiconformal conjugacy  $\varphi\in[\phi]$ satisfies $\|\mu_\varphi\|=\max_{1\leq j\leq N}\|\mu_\varphi|_{A_j}\|
%\geq\max_{1\leq j\leq N}\delta_{[\phi]}^j.$
This implies $\delta_{[\phi]}= \max_{1\leq j\leq N}\delta_{[\phi]}^j$. Further,  if $\delta_{[\phi]}^1=\cdots=\delta_{[\phi]}^N$, then
$\psi$ is the unique extremal map.

If  $\|\mu_\psi|_{A_k}\|<\delta_{[\phi]}$ for some $k$, then one can deform $\psi|_{A_k}$ to another map $\varphi$ modulo the boundary with $\|\mu_\varphi|\|\leq\delta_{[\phi]}$. This yields another quasiconformal conjugacy homotopic to $\psi$ and with the same dilatation as $\psi$.
\hfill $\Box$

\subsection{Appendix: The Teichm\"uller space of $(f,X(f))$}

The general Teichm\"uller theory of rational maps is developed in \cite{MS}. It's shown that the Teichm\"uller space of $(f,X(f))$ is isomorphic to $\mathbb{H}^N$. Here we precise this statement by Theorem \ref{main}.

We begin with the definition of the Teichm\"uller space of $(f,X(f))$ following \cite{MS}. Let $QC(f,X(f))$ be the set of quasiconformal self-conjugacies  of $(f,X(f))$.
$QC_0(f,X(f))\subset QC(f,X(f))$  consists  of those conjugacies $h$ isotopic to the identity in the following sense:
 there is a family $(h_t)_{t\in[0,1]}\subset QC(f,X(f))$ with $h_0=id, h_1 = h$ such that $(t, z)\rightarrow (t, h_t(z))$ is a
homeomorphism from $[0, 1] \times \mathbb{\widehat{C}}$ onto itself.  The Teichm\"uller space ${\rm Teich}(f,X(f))$ is the set of equivalence classes $[((g, X(g)), \phi)]$ of pairs $((g, X(g)), \phi)$, where  $g\in \mathcal{P}_d$, $\phi$  is a quasiconformal conjugacy between $(f,X(f))$ and $(g,X(g))$, and we say  $((g_1, X(g_1)), \phi_1)$ and $((g_2, X(g_2)), \phi_2)$ are equivalent  if there is a conformal conjugacy $\psi$ between $(g_1, X(g_1))$ and $(g_2, X(g_2))$ such that $\phi_2^{-1}\circ \psi\circ\phi_1\in QC_0(f,X(f))$.
The Teichm\"uller metric between $\zeta_i=[((g_j, X(g_j)), \phi_j)], j=1,2$ is defined by
$$d(\zeta_1,\zeta_2)=\inf \log K(\phi),$$
where the infimum is over all quasiconformal conjugacies between $(g_1, X(g_1))$ and $(g_2, X(g_2))$,
homotopic to $\phi_2\circ \phi_1^{-1}$.  We remark that this metric (instead of its half) can make the map in Theorem \ref{teich} isometric. %Remark, here the d(\zeta_1,\zeta_2)

Give any pair of points $a=(a_1,\cdots,a_N)$, $b=(b_1,\cdots,b_N)\in \mathbb{H}^N$, we define the distance $d_{\mathbb{H}^N}(a,b)$ by
$$d_{\mathbb{H}^N}(a,b)=\max_{1\leq j\leq N}d_\mathbb{H}(a_j,b_j).$$
%where the infimum is over all representatives of the markings $[((g_j, X(g_j)), \phi_j)]$.

For $\tau=(\tau_1,\cdots,\tau_N)\in\mathbb{H}^N$,  let  $\phi_\tau:\mathbb{C}\rightarrow\mathbb{C}$ solve the Beltrami equation
$$\frac{\overline{\partial }\phi_\tau}{\partial \phi_\tau}=\sum_{k\in \mathbb{Z}}\sum_{j=1}^N \frac{\tau_j-1}{\tau_j+1}\chi_{f^k(A_j)}\frac{\overline{\partial }G_f}{\partial G_f}.$$
The holomorphic family of quasiconformal maps is normalized so that $\phi_{(1,\cdots,1)}=id$ and $f_\tau=\phi_\tau\circ f\circ\phi^{-1}_\tau \in \mathcal{P}_d$.
 By the proof of Theorem \ref{main}, we have (compare  \cite{DP1}, Lemma 5.2)

\begin{thm}\label{teich} The map
\begin{equation*}
\Phi:\begin{cases}
 \mathbb{H}^N\rightarrow {\rm Teich}(f,X(f)),\\
\tau\mapsto [((f_\tau, X(f_\tau)),\phi_\tau)].
\end{cases}
\end{equation*}
is biholomorphic and isometric.
\end{thm}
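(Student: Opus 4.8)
The plan is to verify three things in turn---that $\Phi$ is well defined and holomorphic, that it is a bijection, and that it is an isometry---after which biholomorphy is automatic, since a holomorphic bijection between complex manifolds of the same dimension $N$ is a biholomorphism, and the complex structure on $\mathrm{Teich}(f,X(f))$ is the one furnished by \cite{MS}. For holomorphy I would appeal to the Ahlfors--Bers theorem. The Beltrami coefficient defining $\phi_\tau$ depends holomorphically on $\tau\in\mathbb{H}^N$, because each weight $\frac{\tau_j-1}{\tau_j+1}$ is holomorphic in $\tau_j$ with modulus $<1$; hence the normalized solution $\phi_\tau$, and with it the monic centered polynomial $f_\tau=\phi_\tau\circ f\circ\phi_\tau^{-1}$, varies holomorphically in $\tau$. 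This induces a holomorphic map $\mathbb{H}^N\to\mathrm{Teich}(f,X(f))$, and the normalization $\phi_{(1,\dots,1)}=\mathrm{id}$ pins down the base point.

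For bijectivity I would read the parameter off the stretch--turn data. Given a representative $((g,X(g)),\phi)$, set $\tau_j:=\tau^*_j$, which lies in $\mathbb{H}$ since $\Re\tau^*_j=\frac{S_\phi(\ell_j)-S_\phi(\ell_{j-1})}{\ell_j-\ell_{j-1}}>0$ by monotonicity of $S_\phi$. By construction $\phi_\tau$ and $\phi$ share the values $(S(\ell_j),T(\ell_j))$ at every critical level, so by Lemma \ref{c-equi} the composite $\phi\circ\phi_\tau^{-1}$ is a conjugacy from $f_\tau$ to $g$ with vanishing turning at all critical levels; this conjugacy is homotopic to a conformal conjugacy $\psi$, and $\phi^{-1}\circ\psi\circ\phi_\tau\in QC_0(f,X(f))$, exhibiting $\Phi(\tau)=[((g,X(g)),\phi)]$ and proving surjectivity. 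Injectivity is a rigidity statement: a conformal conjugacy between basins of monic centered polynomials preserves all turning parts, and elements of $QC_0$ fix the turning data at the critical levels, so $\Phi(a)=\Phi(b)$ forces $a=b$ (the isometry below gives this as well).

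The heart is the isometry, and here I would reduce everything to Theorem \ref{main}. For $a,b\in\mathbb{H}^N$ the Teichm\"uller distance $d(\Phi(a),\Phi(b))$ is the infimal $\log K$ over conjugacies homotopic to $\phi_b\circ\phi_a^{-1}$, which Theorem \ref{main} evaluates as $\max_{1\le j\le N}\log K_j$, where $K_j=\frac{1+\delta^j}{1-\delta^j}$ and $\delta^j=\bigl|\frac{\tau_j-1}{\tau_j+1}\bigr|$ with $\tau_j$ the slope of the composed conjugacy on $A_j$. Passing to the flat coordinate $w=\log\phi_f$, in which $|dw|=|\omega_f|$, the map $\phi_a$ acts on $A_j$ as the real-affine map $\left(\begin{smallmatrix}\Re a_j&0\\ \Im a_j&1\end{smallmatrix}\right)$ (this is the computation underlying Lemma \ref{beltrami}), so $\phi_b\circ\phi_a^{-1}$ is affine with slope $\tau_j=\frac{b_j-i\,\Im a_j}{\Re a_j}$. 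The decisive identity is then $\log K_j=d_{\mathbb H}(a_j,b_j)$: writing $\rho(\tau)=\log\frac{1+|(\tau-1)/(\tau+1)|}{1-|(\tau-1)/(\tau+1)|}$, the Cayley transform $\tau\mapsto\frac{\tau-1}{\tau+1}$ is an isometry of $(\mathbb{H},|dz|/\Re z)$ onto the unit disk sending $1$ to $0$, whence $\rho(\tau)=d_{\mathbb H}(1,\tau)$; moreover $z\mapsto\frac{z-i\,\Im a_j}{\Re a_j}$ is a hyperbolic isometry of $\mathbb{H}$ taking $a_j$ to $1$ and $b_j$ to $\tau_j$, so $\log K_j=\rho(\tau_j)=d_{\mathbb H}(1,\tau_j)=d_{\mathbb H}(a_j,b_j)$. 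Taking the maximum over $j$ gives $d(\Phi(a),\Phi(b))=\max_j d_{\mathbb H}(a_j,b_j)=d_{\mathbb{H}^N}(a,b)$.

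The isometry computation itself is short once the setup is in place; the parts I expect to need the most care are structural rather than computational. First, the decoupling of the extremal problem annulus by annulus, and the claim that the infimal dilatation equals $\max_j\log K_j$, rest entirely on the Reich--Strebel argument of Theorem \ref{main}, which I would cite rather than reprove. Second, and this is the genuine obstacle, the surjectivity step requires producing a \emph{conformal} conjugacy in the correct homotopy class, i.e.\ the conformal rigidity of monic centered polynomial basins under the $\mathcal{P}_d$ normalization together with the precise interplay with the topology of $QC_0(f,X(f))$. By contrast, holomorphy is the standard Ahlfors--Bers dependence, and the isometry is a direct corollary of Theorem \ref{main} and the Cayley identity above.
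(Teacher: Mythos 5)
Your proposal is correct and follows essentially the same route as the paper: the paper's entire proof of Theorem \ref{teich} is the remark that it follows from the proof of Theorem \ref{main} (compare \cite{DP1}, Lemma 5.2), and your argument is exactly that reduction carried out in detail --- Ahlfors--Bers dependence for holomorphy, matching of the stretch--turn data at the critical levels $\ell_j$ for surjectivity, and the identity $\log K_j=d_{\mathbb{H}}(a_j,b_j)$ via the Cayley transform for the isometry. The point you flag as delicate (producing a conformal conjugacy in the correct homotopy class, which follows because the piecewise-linear representative $\psi\in[\phi]$ from the proof of Theorem \ref{main} has the same Beltrami coefficient as $\phi_{\tau^*}$, so $\psi\circ\phi_{\tau^*}^{-1}$ is conformal) is glossed in the paper as well, so your write-up is if anything more complete than the published one.
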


\end{document}